\newtheorem{theorem}{Theorem}[section]
\theoremstyle{definition}
\newtheorem{definition}[theorem]{Definition}
\theoremstyle{remark}
\newtheorem{remark}[theorem]{Remark}
\theoremstyle{proposition}
\newtheorem{proposition}[theorem]{Proposition}
\theoremstyle{corollary}
\newtheorem{corollary}[theorem]{Corollary}
\newcommand{\Rn}{\mathbb{R}^n}
\newcommand{\R}{\mathbb{R}}
\newcommand{\Cn}{\mathbb{C}^n}
\newcommand{\C}{\mathbb{C}}
\newcommand{\Z}{\mathbb{Z}}
\newcommand{\abs}[1]{\left\lvert#1\right\rvert}
\DeclareMathOperator{\real}{Re}
\DeclareMathOperator{\imag}{Im}
\newcommand{\im}{\operatorname{Im}}
\newcommand{\re}{\operatorname{Re}}
\begin{document}
\title[On Rad\'o's theorem for polyanalytic functions]{On Rad\'o's theorem for polyanalytic functions}

\author[Abtin~Daghighi]{Abtin~Daghighi}
%\address{Ume\.a University, SE-581 83 , Ume\.a, Sweden} 
%\email{abtin.daghighi@umu.se}
%\author[Frank~Wikstr{\"o}m]{Frank~Wikstr{\"o}m }
%\address{Lund University, SE-221 00 Lund, Sweden}
%\email{Frank.Wikstrom@math.lth.se}
\thanks{*Corresponding author: abtindaghighi@gmail.com}

\subjclass[2000]{Primary 
35G05,  %linear higher order eqns 
35J62; 
32A99,
32V25 %Extension of functions and other analytic objects from
%35J62  	Quasilinear elliptic equations
%35J92  	Quasilinear elliptic equations with $p$-Laplacian
} 
\keywords{Rad\'o's theorem, Polyanalytic functions, zero sets, $\alpha$-analytic functions}

\begin{abstract}
We prove versions of Rad\'o's theorem for polyanalytic functions in one variable and also on simply connected $\mathbb{C}$-convex domains in $\mathbb{C}^n$.
Let $\Omega\subset \mathbb{C}$ be a bounded, simply connected domain and let $q\in \mathbb{Z}_+.$ 
Suppose at least one of the following conditions holds true:
(i) $g\in C^{q}(\Omega).$
(ii) $g\in C^\kappa(\Omega),$ for $\kappa=\min\{1,q-1\},$ such that $g$ is $q$-analytic on $\Omega\setminus g^{-1}(0)$ and such that $\re g$ ($\im g$ respectively) is a solutions to the $p'$-Laplace equation ($p''$-Laplace equation respectively) on $\Omega\setminus g^{-1}(0)$, for some $p',p''>1$.
Then $g$ agrees (Lebesgue) a.e.\ with a function that is $q$-analytic on $\Omega.$ 
\\
In the process we give a simple proof of the fact that 
if $f\in C^q(\Omega)$
is $q$-analytic on $\Omega\setminus f^{-1}(0)$
then $f$ is $q$-analytic on $\Omega.$
The extensions of the results to several complex variables are straightforward using known techniques.
\end{abstract}

\maketitle

\section{Introduction}

Rad\'o's theorem states that a continuous function on an open subset of $\Cn$ that is holomorphic off its zero set extends to a holomorphic function on the given open set. For the one-dimensional result see Rad\'o \cite{t1}, and for a generalization to several variables, see e.g.~Cartan \cite{cartan}.

\begin{definition}
Let $\Omega\subset \C$ be an open subset. A function $f$ on $\Omega$ is called \emph{polyharmonic of order $q$} if
$\Delta^q f=0$ on $\Omega$, where $\Delta$ denotes the Laplace operator.
\end{definition} 

\begin{definition}
	Let $\Omega\subseteq\Rn$ be an open subset. For a fixed $p>1,$
	the {\em $p$-Laplace} operator of a real-valued function $u$ on $\Omega$ is defined as
	\begin{equation}
	\Delta_p:=\mbox{div}(\abs{\nabla u}^{p-2}\nabla u)
	\end{equation}
	The operator can also be defined for $p=1$ (it is then the negative of the so-called mean curvature operator) and $p=\infty$ but we shall not concern ourselves with such cases.
\end{definition}

\begin{remark}\label{regrem}
	Note the subtle similarity between the notation for the $p$-Laplace operator 
	\begin{equation}
	\Delta_p=\mbox{div}(\abs{\nabla u}^{p-2}\nabla u)
	\end{equation}
	and that of the $p$:th power of the Laplace operator $\Delta^p$.
	We have that $\Delta_{2} =\Delta.$ 
	More generally, we have
	\begin{equation}
	\Delta_p u= \abs{\nabla u}^{p-4}\left( \abs{\nabla u}^{2}\Delta u +(p-2)
	\sum_{i,j=1}^n\partial_{x_i} u \cdot \partial_{x_j} u \cdot \partial_{x_i}\partial_{x_j} u\right)
	\end{equation}
	Note that $\Delta_p$ is {\em quasilinear}.
	At least they both share the property of being elliptic operators. In the case of $\Delta^p$ this is a direct consequence of the fact that $\Delta$ is a elliptic operator and therefore any finite power is also, in particular the elliptic regularity theorem
	applies to $\Delta^p$ and to $\Delta_p$, and implies that any real-valued 
	distribution solution $u$ to $\Delta^p u=0$ (or to $\Delta_p$) on a domain $\Omega\subset\R^n$ is Lebesgue a.e.\ equal to a $C^\infty$-smooth solution
	$\tilde{u}$ to $\Delta^p \tilde{u}=0$ (or to $\Delta_p\tilde{u}=0$) on $\Omega.$
\end{remark}
Kilpel\"ainen \cite{kilpelainen} proved the following.
\begin{theorem}\label{kipelthm}
	If $\omega\subset\R^2$ is a domain and if $u\in C^1(\Omega)$ satisfies the $p$-Laplace equation
	$\mbox{div}(\abs{\nabla}^{p-2}\nabla u)=0$ on $\Omega\setminus u^{-1}(0)$ then $u$ is a solution to the $p$-Laplacian on $\Omega.$
\end{theorem}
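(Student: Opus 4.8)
The plan is to prove that $u$ is a weak solution of $\operatorname{div}(|\nabla u|^{p-2}\nabla u)=0$ on all of $\Omega$; since $u\in C^1(\Omega)$ this already means $u$ solves the $p$-Laplace equation on $\Omega$ in the usual sense (and, if one wishes, interior $C^{1,\alpha}$-regularity applies afterwards). Write $A(\xi):=|\xi|^{p-2}\xi$, so that $|A(\xi)|=|\xi|^{p-1}$ and, because $p>1$, $A$ extends continuously to $\R^2$ with $A(0)=0$; hence $A(\nabla u)$ is continuous on $\Omega$ and $T:=\operatorname{div}(A(\nabla u))$ is a well-defined distribution on $\Omega$. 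We may assume $Z:=u^{-1}(0)$ is neither empty nor all of $\Omega$. By hypothesis $T=0$ on $\Omega\setminus Z$, and the goal is $T=0$ on $\Omega$.

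\emph{The regular part of the zero set.} Set $Z_1:=\{x\in Z:\nabla u(x)\ne 0\}$. Near a point of $Z_1$, continuity of $\nabla u$ and the implicit function theorem show that $Z$ is a $C^1$ curve $\gamma$ splitting a small ball into $\{u>0\}$ and $\{u<0\}$. Integrating by parts on each side against a test function $\phi$, the interior integrals vanish (there $u$ is $p$-harmonic), while the two boundary integrals over $\gamma$ cancel because $A(\nabla u)$, hence its conormal component, is continuous across $\gamma$ — this is exactly where $u\in C^1$ is used; for merely continuous $u$ a jump in $\nabla u$ would produce a nonzero distributional term along $\gamma$ (as already for $u(x)=|x_1|$ in the plane). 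Thus $u$ is a weak solution on a neighbourhood of every point of $Z_1$, and trivially on $\operatorname{int}(Z)$; consequently $u$ is a weak solution on $\Omega\setminus F$, where $F:=\{x\in\partial Z:\nabla u(x)=0\}$ is relatively closed in $\Omega$, has empty interior, and satisfies $u\equiv 0\equiv\nabla u$ on $F$.

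\emph{Removability of isolated bad points, and reduction.} Suppose $x_0$ has a punctured neighbourhood $B(x_0,r)\setminus\{x_0\}$ on which $u$ is a weak solution. Taking a radial cutoff $\eta_\varepsilon$ vanishing on $B(x_0,\varepsilon)$, equal to $1$ off $B(x_0,2\varepsilon)$, with $|\nabla\eta_\varepsilon|\le C/\varepsilon$, and testing with $\eta_\varepsilon\phi$ gives $\int\eta_\varepsilon A(\nabla u)\cdot\nabla\phi=-\int\phi\,A(\nabla u)\cdot\nabla\eta_\varepsilon$; the left side tends to $\int A(\nabla u)\cdot\nabla\phi$, while the right side is $O\!\big(\varepsilon^{-1}\cdot\varepsilon^2\cdot\sup_{B(x_0,2\varepsilon)}|\nabla u|^{p-1}\big)=o(1)$ as $\varepsilon\to 0$, since $\nabla u$ is continuous (here the planar area factor $\varepsilon^2$ is what makes this go through; note that the radial function $|x|^{(p-2)/(p-1)}$, which for $p>2$ carries a non-removable isolated singularity, is excluded precisely because its gradient is unbounded there). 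Hence $u$ is a weak solution on $B(x_0,r)$. Running this together with a Cantor--Bendixson reduction — at each stage removing the isolated points of the current bad set, which by the above are removable — shows that $u$ is a weak solution on $\Omega$ minus the perfect kernel $K$ of $F$, with $u\equiv 0\equiv\nabla u$ on $K$. If $K=\emptyset$ we are done (combining with the first step), so it remains to show $K=\emptyset$.

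\emph{The obstacle: eliminating the perfect bad set.} This is the one genuinely two-dimensional step, and I expect it to be the crux. On $\Omega\setminus K$, where $u$ is $p$-harmonic, the complex gradient $\partial_{x_1}u-i\,\partial_{x_2}u$ is — after the Bojarski--Iwaniec homeomorphic change of variables — quasiregular, so on each component where $u$ is non-constant it has only isolated zeros of finite order, near which the level set $\{u=0\}$ is a finite union of $C^1$ arcs; since $u$ is continuous and vanishes on the perfect set $K\subseteq\partial Z$ it cannot be locally constant near a point of $K$, and these two facts are incompatible with $K$ being nonempty and perfect. (Equivalently, a boundary--Harnack estimate shows that a positive $p$-harmonic function on a component of $\{u\ne 0\}$ cannot vanish together with its gradient along such a large set.) Hence $K=\emptyset$, $F$ is in fact discrete, and $T=0$ on $\Omega$. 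The preceding steps are soft and use only the $C^1$-hypothesis, elementary cutoff estimates, and the local description of $\{u>0\}$ and $\{u<0\}$; the weight of the theorem rests on controlling the degenerate part of the zero set — the set where $u$ and $\nabla u$ vanish simultaneously — which is where the planar structure theory of $p$-harmonic functions is indispensable.
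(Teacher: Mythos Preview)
The paper does not prove this theorem at all: it is quoted verbatim as a result of Kilpel\"ainen \cite{kilpelainen} and used as a black box in the proof of Theorem~\ref{halvmain}. There is therefore no ``paper's own proof'' to compare your proposal against.

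As an independent attempt at Kilpel\"ainen's theorem, your sketch follows the right strategic lines --- split off the regular part of the zero set via the implicit function theorem, remove isolated degenerate points by a cutoff estimate, and then invoke the planar structure theory (quasiregularity of the complex gradient \`a la Bojarski--Iwaniec) to rule out a residual perfect set of degenerate zeros. Two points would need tightening before this could stand as a proof. First, the Cantor--Bendixson reduction is more delicate than you indicate: you need to argue that $p$-harmonicity persists through the transfinite process, in particular at limit ordinals, which your isolated-point removability lemma does not immediately give. Second, and more seriously, the final paragraph is where the real content lies, and the sentence ``these two facts are incompatible with $K$ being nonempty and perfect'' is an assertion rather than an argument: you need to explain precisely why a nonempty perfect subset of $\partial Z$ on which both $u$ and $\nabla u$ vanish contradicts the quasiregular structure of the complex gradient on the complement --- the complex gradient is only defined (and quasiregular) where $u$ is already known to be $p$-harmonic, i.e.\ on $\Omega\setminus K$, and you must say how its behaviour there constrains $K$ itself.
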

We mention that, more recently, Tarkhanov \& Ly \cite{tarkhanov} proved the following related result in higher dimension.
\begin{theorem}\label{tarkhanovthm}
	Let $\Omega\subseteq\Rn$ be an open subset. If $u\in C^{1,\frac{1}{p-1}}(\Omega)$ such that
	$\mbox{div}(\abs{\nabla}^{p-2}\nabla u)=0$ on $\Omega\setminus u^{-1}(0)$ then this holds true on all of $\Omega.$
\end{theorem}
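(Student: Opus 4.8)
The plan is to work throughout with the continuous vector field $V:=\abs{\nabla u}^{p-2}\nabla u$ on $\Omega$; it is well defined and continuous on all of $\Omega$ because $u\in C^{1}(\Omega)$ and $t\mapsto\abs{t}^{p-2}t$ is continuous on $\Rn$ (with value $0$ at the origin), and it vanishes wherever $\nabla u$ does. Writing $Z:=u^{-1}(0)$, the hypothesis says exactly that $\operatorname{div}V=0$ holds weakly on $\Omega\setminus Z$, and the goal is the single weak identity
\begin{equation}\label{pradotarget}
\int_\Omega V\cdot\nabla\varphi\,dx=0\qquad\text{for every }\varphi\in C_c^\infty(\Omega),
\end{equation}
which is the assertion ``the equation holds on $\Omega$'' (and if one additionally wants classical regularity, one then invokes the $p$-Laplace regularity recalled in Remark~\ref{regrem}). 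Since \eqref{pradotarget} is local, it suffices to verify $\operatorname{div}V=0$ in a neighbourhood of each point of $\Omega$.

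First I would handle every point at which $\nabla u$ does not vanish. At a point $x_0$ with $u(x_0)\neq0$ we are inside $\Omega\setminus Z$ and there is nothing to prove. At a point $x_0$ with $u(x_0)=0$ but $\nabla u(x_0)\neq0$, the implicit function theorem (applicable since $u\in C^1$) provides a ball $B\ni x_0$ in which $\{u=0\}$ is a $C^1$ hypersurface $S$ that splits $B$ into two open pieces $B^{\pm}=B\cap\{\pm u>0\}$, both nonempty because a $C^1$ function with nonzero gradient at $x_0$ changes sign there. On each $B^{\pm}$ we have $\operatorname{div}V=0$, and $V$ is continuous up to $S$; hence for $\varphi\in C_c^\infty(B)$, splitting $\int_B V\cdot\nabla\varphi\,dx$ into the integrals over $B^+$ and over $B^-$ and integrating by parts on each, the two surface integrals over $S$ occur with opposite outward unit normals and the common trace $V|_S$, and so cancel. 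Thus $\operatorname{div}V=0$ on $B$. Altogether $\operatorname{div}V=0$ on the open set $\Omega\setminus N$, where
\[
N:=\{x\in\Omega:\ u(x)=0,\ \nabla u(x)=0\}
\]
is closed and contains $\operatorname{int}Z$ (on which $V\equiv0$ anyway).

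The remaining step --- removing $N$ --- is the only one that uses the Hölder exponent $\tfrac1{p-1}$, and I expect it to be the crux. Since $\nabla u$ vanishes on $N$ and $\nabla u$ is locally $\tfrac1{p-1}$-Hölder, comparing $x$ near $N$ with a nearest point of $N$ gives $\abs{\nabla u(x)}\le C\,\operatorname{dist}(x,N)^{1/(p-1)}$ locally, hence the linear bound $\abs{V(x)}\le C'\operatorname{dist}(x,N)$. Given $\varphi\in C_c^\infty(\Omega)$ with compact support $K$, put $d(x):=\operatorname{dist}(x,N)$ and take Lipschitz cutoffs $\eta_\varepsilon:=\phi(d/\varepsilon)$, where $\phi$ is smooth, $\phi\equiv0$ on $[0,1]$, $\phi\equiv1$ on $[2,\infty)$, $\abs{\phi'}\le2$; then $\eta_\varepsilon$ vanishes on the neighbourhood $\{d<\varepsilon\}$ of $N$, $\abs{\nabla\eta_\varepsilon}\le2/\varepsilon$, and $\nabla\eta_\varepsilon$ is supported in the shell $A_\varepsilon:=\{\varepsilon\le d\le2\varepsilon\}$. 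Testing $\operatorname{div}V=0$ on $\Omega\setminus N$ against the Lipschitz, compactly supported, near-$N$-vanishing function $\varphi\eta_\varepsilon$ gives
\[
\int_\Omega V\cdot\nabla\varphi\;\eta_\varepsilon\,dx=-\int_\Omega\varphi\,V\cdot\nabla\eta_\varepsilon\,dx .
\]
As $\varepsilon\to0$ the left-hand side tends to $\int_\Omega V\cdot\nabla\varphi\,dx$ by dominated convergence ($\eta_\varepsilon\to1$ pointwise off $N$, $V=0$ on $N$, and $\abs{V}\abs{\nabla\varphi}\in L^{1}_{\mathrm{loc}}$), while the right-hand side is bounded by
\[
\norm{\varphi}_\infty\cdot\frac{2}{\varepsilon}\int_{A_\varepsilon\cap K}\abs{V}\,dx\le\norm{\varphi}_\infty\cdot\frac{2}{\varepsilon}\cdot2C'\varepsilon\cdot\abs{A_\varepsilon\cap K}=4C'\norm{\varphi}_\infty\,\abs{A_\varepsilon\cap K},
\]
which tends to $0$ since $A_\varepsilon\cap K\subseteq\{x\in K:0<d(x)\le2\varepsilon\}$ decreases to the empty set. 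This establishes \eqref{pradotarget}.

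What makes the last paragraph the genuine obstacle is that on $N$ the $p$-Laplacian is truly degenerate and the hypothesis says nothing there, so neither uniform ellipticity nor the clean interface-transmission argument of the second step is available; the exponent $\tfrac1{p-1}$ is calibrated precisely so that $\abs{\nabla u}^{p-1}$ vanishes \emph{linearly} at $N$, which is the exact rate that keeps the cutoff error $\varepsilon^{-1}\!\int_{A_\varepsilon}\abs{V}$ controlled by $\abs{A_\varepsilon}\to0$. (One could instead phrase the removal of $N$ through an estimate on its $p$-capacity and nonlinear potential theory, but the elementary cutoff above seems cleanest.)
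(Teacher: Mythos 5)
The first thing to say is that the paper contains no proof of Theorem~\ref{tarkhanovthm}: it is quoted as a known result of Ly and Tarkhanov \cite{tarkhanov} (just as Theorem~\ref{kipelthm} is quoted from Kilpel\"ainen), and it is not even used later --- only the planar result Theorem~\ref{kipelthm} enters the proof of Theorem~\ref{halvmain}. So there is no internal proof to compare yours against; what you have written is a reconstruction of the cited result. That said, your architecture is the right one and isolates the mechanism that makes the theorem true: interpret the equation weakly for the continuous field $V=\abs{\nabla u}^{p-2}\nabla u$; note the hypothesis already gives $\operatorname{div}V=0$ off $u^{-1}(0)$; treat the regular part $\{u=0,\ \nabla u\neq 0\}$ of the zero set as a $C^1$ interface across which the fluxes cancel; and remove the critical part $N=\{u=0,\ \nabla u=0\}$ by a distance cutoff, using that the exponent $\tfrac1{p-1}$ is calibrated so that $\abs{V}=\abs{\nabla u}^{p-1}\lesssim\operatorname{dist}(\cdot,N)$ and hence $\varepsilon^{-1}\int_{A_\varepsilon}\abs{V}\lesssim\abs{A_\varepsilon}\to 0$. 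The cutoff computation and the dominated-convergence passages are correct as written.

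The one step you assert rather than prove is the interface cancellation. You ``integrate by parts'' over $B^{\pm}$, but $V$ is merely continuous and $\operatorname{div}V=0$ holds only distributionally there, so the classical divergence theorem does not apply; the existence of a normal trace of $V$ on $S$ and its identification with the pointwise restriction $V|_S\cdot\nu$ is exactly what must be established (for general divergence-measure fields the normal trace need not be the pointwise restriction, and here it is rescued only by the continuity of $V$ up to, indeed across, $S$). A short way to close this in your setting: shrink $B$ so that $\abs{\nabla u}\geq c>0$ there, pick $\chi\in C^\infty(\R)$ with $\chi=0$ on $(-\infty,1]$ and $\chi=1$ on $[2,\infty)$, and test the equation on $B^+$ against $\varphi\,\chi(u/\delta)$, which is admissible because its support avoids $u^{-1}(0)$. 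Using $V\cdot\nabla u=\abs{\nabla u}^{p}$ and the coarea formula, the error term becomes an average over $\delta\le t\le 2\delta$ of $F(t):=\int_{\{u=t\}}\varphi\abs{\nabla u}^{p-1}\,d\sigma$, which tends to $F(0)=\int_S\varphi\abs{\nabla u}^{p-1}\,d\sigma$ since the level sets converge in $C^1$; the analogous computation on $B^-$ with $\chi(-u/\delta)$ produces $-F(0)$, and summing the two limits gives $\int_B V\cdot\nabla\varphi\,dx=0$. (Alternatively, invoke the Gauss--Green formula for continuous divergence-measure fields.) With that step supplied, your proof is complete.
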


%Recall that any continuously differentiable function is H\"older continuous with exponent $c\in (0,1),$
%but the converse is false, H\"older continuity with exponent $c\in (0,1)$ in general only implies continuity.
We shall use the result of Kilpel\"ainen \cite{kilpelainen} in order to prove a natural version of Rad\'o's theorem for polyanalytic functions.
Avanissian \& Traor\'e~\cite{avan1},~\cite{avan2} introduced the
following definition of polyanalytic functions of order $\alpha\in \Z_+^n$ in
several variables.
\begin{definition}\label{avandef}
Let $\Omega\subset \Cn$ be a domain, let $\alpha\in \Z_+^n$ and let $z=x+iy$ denote
holomorphic coordinates in $\Cn$.  A function $f$ on $\Omega$ is called \emph{polyanalytic of order $\alpha$} if in a neighborhood of
every point of $\Omega$, $\left(\frac{\partial}{\partial
\bar{z}_j}\right)^{\alpha_j}\!f(z)=0,1\leq j\leq n$. 
\end{definition}

\begin{definition}
Let $\Omega\subset\Cn$ be an open subset and let $(z_1,\ldots,z_n)$ denote holomorphic coordinates for $\Cn.$
A function $f$, on $\Omega,$ is said to be {\em separately $C^{k}$-smooth with respect to the $z_j$-variable}, if
for any fixed $(c_1,\ldots,c_{n-1})\in \C^{n-1},$ chosen such that
the function 
\[ z_j \mapsto f(c_1,\ldots,c_{j-1},z_j,c_j,\ldots,c_{n-1}), \]
is well-defined (i.e.\ such that $(c_1,\ldots,c_{j-1},z_j,c_j,\ldots,c_{n-1})$ belongs to the domain of $f$) is $C^{k}$-smooth with respect to $\re z_j, \im z_j$.
For $\alpha\in \Z_+^n$ we say that $f$ is separately $\alpha$-smooth if $f$ is separately
$C^{\alpha_j}$-smooth with respect to $z_j$ for each $1\leq j\leq n$.
\end{definition}

We shall need the following result.
\begin{theorem}(See \cite[Theorem 1.3, p.\,264]{avan2})\label{hartog1}
Let $\Omega\subset\Cn$ be a domain and let $z=(z_1,\ldots,z_n),$
denote holomorphic coordinates in $\Cn$ with $\real z=:x, \imag z=y$. Let
$f$ be a function which, for each $j$, is polyanalytic of order
$\alpha_j$ in the variable $z_j=x_j+iy_j$ (in such case we shall
simply say that $f$ is separately polyanalytic of order
$\alpha$). Then $f$ is jointly smooth with respect to
$(x,y)$ on $\Omega$ and furthermore is polyanalytic of order
$\alpha=(\alpha_1,\ldots,\alpha_n)$ in the sense of Definition~\ref{avandef}.
\end{theorem}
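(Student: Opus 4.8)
The plan is to peel off the antiholomorphic monomials in each variable one at a time, reducing the hypothesis to separate holomorphy, where the classical Hartogs theorem applies. I would begin with the one–variable normal form: if $g$ is polyanalytic of order $m$ in a single complex variable $w$ on a planar domain (so $g\in C^m$ with $\partial_{\bar w}^{m}g=0$), then $\partial_{\bar w}^{m-1}g$ is a $C^1$ function annihilated by $\partial_{\bar w}$, hence holomorphic; setting $h_{m-1}:=\frac{1}{(m-1)!}\partial_{\bar w}^{m-1}g$ and replacing $g$ by $g-\bar w^{\,m-1}h_{m-1}$ lowers the order by one, and iterating yields
\[
 g(w)=\sum_{k=0}^{m-1}\bar w^{\,k}\,h_k(w),\qquad h_0,\dots,h_{m-1}\ \text{holomorphic},
\]
so in particular $g\in C^\infty$. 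Applied to the $z_1$-slices of $f$, with $z'=(z_2,\dots,z_n)$ held fixed, this gives $f(z_1,z')=\sum_{k<\alpha_1}\bar z_1^{\,k}h_k(z_1,z')$ with each $h_k(\cdot,z')$ holomorphic in $z_1$.

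The crucial point is that every coefficient $z'\mapsto h_k(z_1,z')$ is again separately polyanalytic, of orders $(\alpha_2,\dots,\alpha_n)$, for each fixed $z_1$. It suffices to treat the top coefficient; the others then follow by subtracting $\bar z_1^{\alpha_1-1}h_{\alpha_1-1}$ and inducting downward on $\alpha_1$, since multiplying by the $z'$-constant $\bar z_1^{\alpha_1-1}$ preserves separate polyanalyticity in $z'$. For the top coefficient I would use that on a small circle $z_1=a+re^{i\theta}$ the Fourier coefficient of $\theta\mapsto f(a+re^{i\theta},z')$ at frequency $-(\alpha_1-1)$ equals $r^{\alpha_1-1}h_{\alpha_1-1}(a,z')$, i.e.
\[
 h_{\alpha_1-1}(a,z')=\frac{1}{2\pi\,r^{\alpha_1-1}}\int_0^{2\pi}f\bigl(a+re^{i\theta},z'\bigr)\,e^{i(\alpha_1-1)\theta}\,d\theta .
\]
For each fixed $\theta$ the integrand is separately polyanalytic in $z'$, with antiholomorphic degrees bounded by $(\alpha_2-1,\dots,\alpha_n-1)$; since that class is closed under locally uniform limits and under integration against a $z'$-independent kernel — a fact that, for holomorphy, is classical, and for the prescribed antiholomorphic degree again passes to the limit by Cauchy/Fourier extraction on circles — the left-hand side inherits separate polyanalyticity in $z'$.

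Iterating the peeling over all $n$ variables then represents $f$, locally, as a finite sum $\sum_{0\le k_j<\alpha_j}\bar z_1^{k_1}\cdots\bar z_n^{k_n}\,g_k(z)$ in which each $g_k$ is separately holomorphic in $z_1,\dots,z_n$. By the classical several-variable Hartogs theorem on separately holomorphic functions, each $g_k$ is jointly holomorphic, hence jointly $C^\infty$; therefore $f$ is jointly $C^\infty$ on $\Omega$. Once joint smoothness is available, the separately computed derivatives $\partial_{\bar z_j}^{\alpha_j}f$ coincide with the joint ones and vanish identically on $\Omega$, which is exactly polyanalyticity of order $\alpha$ in the sense of Definition \ref{avandef}.

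The step I expect to be the genuine obstacle is the inheritance of separate polyanalyticity by the coefficient functions: at the outset $f$ carries only separate, not joint, regularity, so this passage has to be carried out through operations — Cauchy/Fourier extraction and integration in a parameter — that visibly respect the hypotheses, and the stability of the relevant class under the resulting limits ultimately rests on the same subharmonicity input (Hartogs' lemma) that underlies the classical several-variable theorem. After that, the remaining work is bookkeeping together with the appeal to the holomorphic Hartogs theorem.
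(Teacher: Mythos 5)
The paper does not actually prove this statement: it is imported from Avanissian and Traor\'e \cite[Theorem 1.3]{avan2} and used as a black box, so there is no internal proof to compare yours against. Judged on its own terms, your reduction strategy --- peel off the antiholomorphic powers one variable at a time, land on separately holomorphic coefficients, and invoke the classical several-variable Hartogs theorem --- is the natural skeleton for such a result. The one-variable normal form, the Fourier-coefficient formula for the top coefficient $h_{\alpha_1-1}$, and the downward induction on the order in $z_1$ are all correct as stated.

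The gap is exactly where you say you expect it, and it is not closed. To conclude that $z'\mapsto\int_0^{2\pi} f\bigl(a+re^{i\theta},z'\bigr)e^{i(\alpha_1-1)\theta}\,d\theta$ is separately polyanalytic in $z'$, you must interchange $\partial_{\bar z_2}^{\alpha_2}$ (or a Morera/Cauchy contour integral in $z_2$) with the $\theta$-integral. Every standard justification of such an interchange --- dominated convergence, Fubini, uniform convergence of difference quotients --- requires some uniformity in $\theta$ of the family $z'\mapsto f(a+re^{i\theta},z')$: joint measurability together with local boundedness at the very least. The hypotheses give only \emph{separate} regularity, and separately continuous functions need not be locally bounded (consider $xy/(x^2+y^2)^2$ near the origin), so the assertion that the class ``is closed under integration against a $z'$-independent kernel'' is precisely what needs to be proved, not a fact that may be cited. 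This is the same difficulty that makes the holomorphic Hartogs theorem genuinely hard; the classical resolution (Osgood's lemma on a dense open subset via Baire category, then Hartogs' lemma on subharmonic functions to propagate boundedness) has to be run here as well, either on $f$ itself or on the coefficient functions, before any integral representation can be differentiated under the integral sign. Your closing paragraph correctly identifies this as the obstacle, but naming the obstacle does not remove it: as written, the argument is circular at its central step.
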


\section{Statement and proof of the result}

Let us make the following first observation.
\begin{proposition}\label{halvmainlemma}
Let $\Omega\subset\C$ be a simply connected domain, let $q\in \Z_+$ and let $f\in C^q(\Omega)$
be a $q$-analytic function on $\Omega\setminus f^{-1}(0).$
Then $f$ is $q$-analytic on $\Omega.$
\end{proposition}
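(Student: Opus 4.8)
The plan is to reduce the problem to Rad\'o's classical theorem by peeling off the polyanalytic structure one Wirtinger derivative at a time. Recall that a $C^q$ function $f$ on a domain in $\C$ is $q$-analytic precisely when $\left(\frac{\partial}{\partial\bar z}\right)^q f=0$; equivalently $f=\sum_{k=0}^{q-1}\bar z^k h_k$ with each $h_k$ holomorphic, but I prefer the derivative formulation since it localizes well. The key auxiliary observation is that if $f$ is $q$-analytic on $\Omega\setminus f^{-1}(0)$, then on that open set $g:=\left(\frac{\partial}{\partial\bar z}\right)^{q-1}f$ is holomorphic, and I want to show $g$ extends holomorphically across $f^{-1}(0)$, after which an induction on $q$ finishes the argument.

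First I would set up the induction on $q$. The base case $q=1$ is exactly Rad\'o's theorem (\cite{t1}): $f\in C^1(\Omega)$ holomorphic off $f^{-1}(0)$ is holomorphic on $\Omega$. For the inductive step, assume the proposition holds for $q-1$ and let $f\in C^q(\Omega)$ be $q$-analytic on $U:=\Omega\setminus f^{-1}(0)$. Write $f$ in the Wirtinger expansion $f=\sum_{k=0}^{q-1}\bar z^{\,k} h_k$ on $U$, where the $h_k$ are holomorphic on $U$ (this is just integrating $\partial_{\bar z}^q f=0$, and the $h_k$ are explicit linear combinations of the $\partial_{\bar z}^j f$, hence $C^0$ on all of $\Omega$ since $f\in C^q(\Omega)$; in fact $h_{q-1}=\frac{1}{(q-1)!}\partial_{\bar z}^{q-1}f$ is continuous on $\Omega$). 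Now $h_{q-1}$ is continuous on $\Omega$ and holomorphic on $U\supseteq\Omega\setminus h_{q-1}^{-1}(0)$ is \emph{not} quite what we have — the zero set of $f$ need not contain or be contained in that of $h_{q-1}$ — so the cleanest route is different: I would instead argue that near any boundary point of $f^{-1}(0)$ the function $f$ is continuous and vanishes on a set with nonempty interior complement, invoke Rad\'o-type reasoning on $\re h_{q-1}$, etc.

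Actually the smoothest approach: apply the $q=1$ case not to $f$ but to each coefficient after a localization. Fix $p\in f^{-1}(0)$ with a neighborhood $V$; on $V\setminus f^{-1}(0)$ we have the holomorphic coefficients $h_0,\dots,h_{q-1}$, and the matrix identity expressing $(h_0,\dots,h_{q-1})$ in terms of $(f,\partial_{\bar z}f,\dots,\partial_{\bar z}^{q-1}f)$ shows all $h_k\in C^{q-1-k}(V)$ globally, in particular each $h_k$ is continuous on $\Omega$. The hard part is then showing each $h_k$ is holomorphic on all of $\Omega$, i.e.\ extends across $f^{-1}(0)$. For this I would use that $f^{-1}(0)=\bigcap_k\{z:\sum\bar z^k h_k(z)=0\text{ identically in the relevant sense}\}$ is too crude; instead, since $f$ vanishes on $f^{-1}(0)$ together with all derivatives up to order $q-1$ being forced by continuity where $f^{-1}(0)$ has interior, one reduces to the boundary $\partial(f^{-1}(0))$, which is where the real content sits. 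On a neighborhood of a point of $\partial(f^{-1}(0))$, the standard Rad\'o argument via $\log|f|$ being subharmonic (and $-\infty$ on $f^{-1}(0)$, hence its harmonic majorant argument) shows $f$ itself is holomorphic there if $q=1$; for general $q$, replace $f$ by $\partial_{\bar z}^{q-1}f=(q-1)!\,h_{q-1}$, note this is continuous on $\Omega$ and holomorphic on $\Omega\setminus f^{-1}(0)$, and observe that its zero set \emph{contains} $f^{-1}(0)$ up to the interior issue — so Rad\'o's theorem applies to $h_{q-1}$ directly, giving $h_{q-1}$ holomorphic on $\Omega$. Then $\partial_{\bar z}^{q-1}f=(q-1)!\,h_{q-1}$ extends to a holomorphic (in particular $C^\infty$) function on $\Omega$; hence $\tilde f:=f-\frac{\bar z^{\,q-1}}{(q-1)!}\,\partial_{\bar z}^{q-1}f\in C^{q-1}(\Omega)$, is $(q-1)$-analytic on $U$, and by the inductive hypothesis is $(q-1)$-analytic on all of $\Omega$; adding back the smooth term $\frac{\bar z^{q-1}}{(q-1)!}h_{q-1}(q-1)!$ gives that $f$ is $q$-analytic on $\Omega$.

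The step I expect to be the genuine obstacle is the honest invocation of Rad\'o's theorem for $h_{q-1}=\frac{1}{(q-1)!}\partial_{\bar z}^{q-1}f$: one must check that $\{h_{q-1}=0\}\supseteq f^{-1}(0)$ — true because on $f^{-1}(0)$ all the derivatives $\partial_{\bar z}^j f$ vanish where that set has interior, while at boundary points one only knows $f=0$, not $\partial_{\bar z}^{q-1}f=0$. So Rad\'o cannot be applied verbatim; instead I would work on $\Omega\setminus f^{-1}(0)$, note $h_{q-1}$ is holomorphic and continuous up to $f^{-1}(0)$, and use the version of Rad\'o's theorem phrased for a continuous function on $\Omega$ that is holomorphic off a closed set on which \emph{some} continuous function vanishes (equivalently, the $\log|h_{q-1}|$ subharmonicity argument, using that $h_{q-1}$ is bounded near $f^{-1}(0)$ and that $f^{-1}(0)$, being the zero set of the continuous function $f$, is "thin enough"). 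Making this precise — and in particular handling the possibility that $f^{-1}(0)$ has nonempty interior, where everything is immediate, versus its boundary, where the subharmonic-majorant argument is needed — is where the care goes; simple connectedness of $\Omega$ is what lets one pass from "$\partial_{\bar z}^{q-1}f$ is locally holomorphic near $f^{-1}(0)$" to a global conclusion without monodromy issues.
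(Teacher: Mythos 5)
Your argument stalls at exactly the step you yourself flag as the ``genuine obstacle,'' and the patch you propose there is not a theorem. You want to conclude that $h_{q-1}=\frac{1}{(q-1)!}\partial_{\bar z}^{q-1}f$, which is continuous on $\Omega$ and holomorphic on $\Omega\setminus f^{-1}(0)$, is holomorphic on all of $\Omega$, and you appeal to a ``version of Rad\'o's theorem phrased for a continuous function on $\Omega$ that is holomorphic off a closed set on which \emph{some} continuous function vanishes.'' But \emph{every} closed subset $E\subseteq\Omega$ is the zero set of a continuous function (e.g.\ $z\mapsto\operatorname{dist}(z,E)$), so that statement would assert that any continuous function holomorphic off an arbitrary closed set is holomorphic --- which is false: take $E=\R\cap\Omega$ and $h(z)=\bar z$ for $\im z>0$, $h(z)=z$ for $\im z\leq 0$. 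Rad\'o's theorem and the $\log\abs{\,\cdot\,}$-subharmonicity argument both use in an essential way that the function itself \emph{vanishes} on the exceptional set, and, as you correctly observe, $h_{q-1}$ need not vanish anywhere on $f^{-1}(0)$ except where that set has interior. So the key step of your induction is not closed, and ``thin enough'' cannot be made precise here: the zero set of a continuous $f$ can be, say, a smooth arc, which is not removable for continuous-up-to-the-boundary holomorphic functions.

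The repair is to use the full strength of the hypothesis $f\in C^q(\Omega)$, which you exploit only to get continuity of $h_{q-1}$; once you use it properly, the entire apparatus (induction, Wirtinger decomposition, Rad\'o) evaporates. This is the paper's proof: $\partial_{\bar z}^q f$ is a \emph{continuous} function on $\Omega$; it vanishes on the open set $\{f\neq 0\}$ by hypothesis, and on the open set $Z:=(f^{-1}(0))^\circ$ because $f\equiv 0$ there; the union $X:=\{f\neq 0\}\cup Z$ is dense in $\Omega$, since its complement $f^{-1}(0)\setminus Z$ has empty interior by construction; hence $\partial_{\bar z}^q f\equiv 0$ on $\Omega$ by continuity. (Simple connectedness is not even needed.) Your instinct that the content sits on $\partial(f^{-1}(0))$ is right, but at such points one does not need $\partial_{\bar z}^{q-1}f$ to vanish --- only $\partial_{\bar z}^{q}f$, and that follows from continuity plus density of $X$. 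A genuine Rad\'o-type difficulty arises only under weaker regularity than $C^q$, which is precisely what Theorem~\ref{halvmain} addresses by importing the $p$-Laplace results of Kilpel\"ainen.
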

\begin{proof}
If $f\equiv 0$ then we are done, so assume $f\not\equiv 0.$
Since $f$ is $C^q$-smooth the function
$\partial_{\bar{z}}^q f$ is continuous.
By assumption $\partial_{\bar{z}}^q f=0$ on $\Omega\setminus f^{-1}(0).$
Set $Z:=(f^{-1}(0))^\circ$ ($^\circ$ denoting the interior) and $X:=\{f\neq 0\}\cup Z.$ Now $f|_Z$ clearly satisfies $\partial_{\bar{z}}^q f=0$.
Let $p\in \partial X.$ If $p$ is an isolated zero of $f$, then
by continuity we have $\partial_{\bar{z}}^q f(p)=0.$ Suppose $p$ is a non-isolated zero.
We have for each sufficiently large $j\in \Z_+$
that $\{\abs{z-p}<1/j\}\cap X\neq \emptyset$.
This implies that there exists a sequence $\{z_j\}_{j\in \Z_+}$
of points $z_j\in X$ such that $z_j\to p$ as $j\to \infty.$
By continuity we have 
\begin{equation}\partial_{\bar{z}}^q f(p)=\lim_{j\to \infty} \partial_{\bar{z}}^q f(z_j) =0\end{equation}
This completes the proof.
\end{proof}

\begin{theorem}\label{halvmain}%
	Let $\Omega\subset \C$ be a bounded, simply connected domain, let $q\in \Z_+$
	and let $f$ be a function $q$-analytic on $\Omega\setminus f^{-1}(0)$. 
	Suppose at least one of the following conditions holds true:\\
	(i) $f\in C^\kappa(\Omega),$ for $\kappa=\min\{1,q-1\},$ and $\re f$ ($\im f$ respectively) is a solutions to the $p'$-Laplace equation ($p''$-Laplace equation respectively) on $\Omega\setminus f^{-1}(0)$, for some $p',p''>1$.\\
	(ii) $f\in C^{q}(\Omega).$
	\\
	Then $f$ agrees (Lebesgue) a.e.\ with a function that is $q$-analytic on $\Omega.$ 
\end{theorem}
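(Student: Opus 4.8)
The plan is to reduce both cases to Rad\'o's classical theorem applied to the $q$-th-power-like reduction, together with elliptic regularity as packaged in Remark~\ref{regrem} and Proposition~\ref{halvmainlemma}.

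First consider case (ii). Here $f\in C^q(\Omega)$ and $f$ is $q$-analytic on $\Omega\setminus f^{-1}(0)$, so Proposition~\ref{halvmainlemma} immediately gives that $f$ is $q$-analytic on all of $\Omega$, and in particular $f$ agrees everywhere (hence a.e.) with a $q$-analytic function, namely itself. So the content is entirely in case (i).

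For case (i), the idea is to handle $\re f$ and $\im f$ separately. Write $f = u + iv$ where $u,v\in C^\kappa(\Omega)$ with $\kappa=\min\{1,q-1\}$. On $\Omega\setminus f^{-1}(0)$, $u$ solves the $p'$-Laplace equation and $v$ solves the $p''$-Laplace equation. Note $f^{-1}(0) = u^{-1}(0)\cap v^{-1}(0)$, so in general $u$ solves $\Delta_{p'}u=0$ on a larger set than $\Omega\setminus u^{-1}(0)$; but we only know the $p$-Laplace equation holds off $f^{-1}(0)$, not off $u^{-1}(0)$. This is the main obstacle: Kilpel\"ainen's theorem (Theorem~\ref{kipelthm}) requires the $p$-Laplace equation to hold off the zero set \emph{of $u$ itself}, i.e. on $\Omega\setminus u^{-1}(0)$, whereas we only have it on $\Omega\setminus f^{-1}(0)\supseteq\Omega\setminus u^{-1}(0)$ — wait, that inclusion is the right way, since $f^{-1}(0)\subseteq u^{-1}(0)$ means $\Omega\setminus u^{-1}(0)\subseteq\Omega\setminus f^{-1}(0)$, so $u$ \emph{does} solve $\Delta_{p'}u=0$ on $\Omega\setminus u^{-1}(0)$. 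Hence Theorem~\ref{kipelthm} applies directly: $u$ is a $p'$-Laplace solution on all of $\Omega$, and similarly $v$ on all of $\Omega$.

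Now invoke elliptic regularity (Remark~\ref{regrem}): since $\Delta_{p'}$ is elliptic, $u$ agrees Lebesgue-a.e.\ with a $C^\infty$ solution $\tilde u$ of $\Delta_{p'}\tilde u=0$ on $\Omega$; likewise $v$ with a smooth $\tilde v$. Set $\tilde f:=\tilde u+i\tilde v$, which is $C^\infty$ on $\Omega$ and equals $f$ a.e. It remains to show $\tilde f$ is $q$-analytic on $\Omega$. Since $\tilde f = f$ a.e.\ and $f$ is $q$-analytic (in particular continuous) on the open set $\Omega\setminus f^{-1}(0)$, and $\tilde f$ is continuous there, we get $\tilde f = f$ everywhere on $\Omega\setminus f^{-1}(0)$, so $\partial_{\bar z}^q\tilde f = 0$ on $\Omega\setminus f^{-1}(0) \supseteq \Omega\setminus\tilde f^{-1}(0)$ — here one must be slightly careful, since $\tilde f^{-1}(0)$ need not equal $f^{-1}(0)$, but on $\Omega\setminus f^{-1}(0)$ we have $\tilde f = f\ne 0$ pointwise, so $\Omega\setminus f^{-1}(0)\subseteq\Omega\setminus\tilde f^{-1}(0)$, and conversely any point where $\tilde f\ne 0$... in fact we just need: $\tilde f\in C^\infty(\Omega)\subseteq C^q(\Omega)$ and $\tilde f$ is $q$-analytic on $\Omega\setminus\tilde f^{-1}(0)$. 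To see the latter, take $p\in\Omega$ with $\tilde f(p)\ne 0$; by continuity $\tilde f\ne 0$ near $p$, so $f\ne 0$ a.e.\ near $p$, hence $f$ is not identically zero on any neighborhood of $p$, so $p\notin(f^{-1}(0))^\circ$; but more directly, on the open dense (in a neighborhood of $p$) set where $f\ne 0$ we have $\tilde f=f$ and $\partial_{\bar z}^q\tilde f=0$, and by continuity of $\partial_{\bar z}^q\tilde f$ this extends to the neighborhood of $p$. Thus $\tilde f$ satisfies the hypotheses of Proposition~\ref{halvmainlemma}, so $\tilde f$ is $q$-analytic on all of $\Omega$. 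Since $\tilde f = f$ a.e., this completes the proof.

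I expect the only genuinely delicate point to be the bookkeeping about which zero set the $p$-Laplace equation and the $q$-analyticity are assumed on — all inclusions must be checked in the correct direction — together with verifying that $\kappa = \min\{1,q-1\}$ is exactly the regularity needed to combine Kilpel\"ainen's $C^1$ hypothesis (for $q\ge 2$) with the degenerate case $q=1$ (where $\kappa=0$ and the statement is essentially classical Rad\'o).
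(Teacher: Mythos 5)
Your proof is correct and follows essentially the same route as the paper: case (ii) via Proposition~\ref{halvmainlemma}, and case (i) by splitting $f=u+iv$, noting $f^{-1}(0)\subseteq u^{-1}(0)$ so that Theorem~\ref{kipelthm} applies to each of $u,v$, invoking elliptic regularity (Remark~\ref{regrem}) to obtain a smooth a.e.\ representative $\tilde f$, and then using density of $\{f\neq 0\}$ together with continuity of $\partial_{\bar z}^q\tilde f$ to conclude $q$-analyticity. The only (cosmetic) difference is that you finish by feeding $\tilde f$ back into Proposition~\ref{halvmainlemma} after upgrading $\tilde f=f$ from a.e.\ to everywhere on the open set $\{f\neq 0\}$, whereas the paper runs a direct contradiction argument on the dense set $X=\{f\neq 0\}\cup (f^{-1}(0))^\circ$; your handling of why the derivatives of $f$ and $\tilde f$ agree there is, if anything, slightly more careful.
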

\begin{proof}
	The case (ii) follows from Proposition \ref{halvmainlemma}. So suppose (i) holds true.
	If $q=1$ the theorem is well-known and due to Rad\'o \cite{t1}, so assume $q\geq 2.$ 
	Let $f=u+iv$ where $u=\re f,$ $v=\im f.$ 
	Now $f^{-1}(0)= u^{-1}(0) \cap v^{-1}(0)$, whence $u$ (and $v$ respectively) is
	a solution to the $p'$-Laplace equation ($p''$-Laplace equation respectively)
	on $\Omega\setminus u^{-1}(0)$ ($\Omega\setminus v^{-1}(0)$ respectively).
	If $f\in C^{\kappa}(\Omega)$ and $q\geq 2$ then $u$ and $v$ respectively are at least $C^1$-smooth thus satisfy the conditions of 
	Theorem \ref{kipelthm}. Hence
	it follows that $u$ ($v$ respectively) are %polyharmonic of order $q$ 
	solutions to the $p'$-Laplace equation ($p''$-Laplace equation respectively) on all of $\Omega$.
	By Remark \ref{regrem} (in particular Elliptic regularity) it follows that $u$ and $v$ respectively agree (Lebesgue) a.e.\ on $\Omega$ with
	$C^\infty$-smooth functions $\tilde{u}$ and $\tilde{v}$ respectively.
	This implies that the function $\tilde{f}:=\tilde{u}+i\tilde{v}$ is $C^\infty$-smooth on $\Omega$
	and agrees (Lebesgue) a.e.\ on $\Omega$ with $f.$
	Suppose there exists a point $p_0\in \Omega$ 
	such that $\partial_{\bar{z}}^q \tilde{f}(p_0)\neq 0.$ Set $Z:=(f^{-1}(0))^\circ$ and $X:=\{f\neq 0\}\cup Z.$ By continuity there exists an open neighborhood $U_{p_0}$ of $p_0$ in $\Omega$
	such that $\partial_{\bar{z}}^q \tilde{f}\neq 0$ on the open subset $U_{p_0}\cap X.$ By the definition of $\tilde{f}$ there exists a set $E$ of zero measure such that
	on $V_{p_0}:=(X\cap U_{p_0})\setminus E$ we have that $\partial_{\bar{z}}^q f$ exists (since $X$ contains no point of $f^{-1}(0)\setminus Z$) and satisfies
	$0=\partial_{\bar{z}}^q f=\partial_{\bar{z}}^q \tilde{f}$ on $V_{p_0},$ which could only happen if
	$V_{p_0}$ is empty which is impossible since $E$ cannot possess interior points.
	We conclude that
	$\partial_{\bar{z}}^q \tilde{f}=0$ on $\Omega.$ This completes the proof.
\end{proof}

\begin{theorem}[Rad\'o's theorem for polyanalytic functions in several complex variables]
	Let $\Omega\subset \Cn$ be a bounded $\mathbb{C}$-convex domain. Let $\alpha\in \Z_+^n$. 
	Suppose $f$ is $\alpha$-analytic on $\Omega\setminus f^{-1}(0)$ such that one of the following conditions hold true:\\
	(i) For each $j=1,\ldots,n$, the function $f$ is separately 
	$C^{\kappa_j}$-smooth with respect to $z_j$ (i.e.\ for each fixed value of the remaining variables $z_k,$ $k\neq j$,
	$f$ becomes a $C^{\kappa_j}$-smooth function of $z_j$), $\kappa_j=\min\{1,\alpha_j-1\}$ and $\re f$ ($\im f$ respectively) are
	solutions to the $p'$-Laplace equation ($p''$-Laplace equation respectively) for some $p',p''>1.$
	\\
	(ii) For each $j=1,\ldots,n$, the function $f$ is separately 
	$C^{\alpha_j}$-smooth with respect to $z_j$.\\
	Then $f$ agrees (Lebesgue) a.e.\ with a function that is $\alpha$-analytic on~$\Omega$. 
\end{theorem}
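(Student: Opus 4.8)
The plan is to deduce the theorem from the one-variable results by restricting $f$ to the complex lines parallel to the coordinate axes and then invoking the Hartogs-type Theorem~\ref{hartog1} of Avanissian \& Traor\'e to pass from separate to joint polyanalyticity. Write $z=(z_1,\dots,z_n)$; for a fixed index $j$ and a fixed value $w$ of the coordinates $z_k$, $k\neq j$, let $S_{j,w}\subset\C$ be the slice of $\Omega$ obtained by freezing $z_k=w_k$ for $k\neq j$. Since $\Omega$ is bounded and $\mathbb{C}$-convex, every nonempty $S_{j,w}$ is a bounded, simply connected domain in $\C$. Moreover, because $\big(\partial/\partial\bar z_j\big)^{\alpha_j}f=0$ on $\Omega\setminus f^{-1}(0)$ and this derivative, computed along the slice, is $\big(\partial/\partial\bar z_j\big)^{\alpha_j}$ of $f|_{S_{j,w}}$, the restriction $f|_{S_{j,w}}$ is $\alpha_j$-analytic off its zero set $f^{-1}(0)\cap S_{j,w}$. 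Thus each slice is placed exactly in the situation of the one-variable statements with $q=\alpha_j$.

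In case~(ii) we have $f|_{S_{j,w}}\in C^{\alpha_j}(S_{j,w})$, so Proposition~\ref{halvmainlemma} yields that $f|_{S_{j,w}}$ is $\alpha_j$-analytic on all of $S_{j,w}$. Since this holds for every $j$ and every $w$, the function $f$ is separately polyanalytic of order $\alpha$, and Theorem~\ref{hartog1} then gives that $f$ is jointly $\alpha$-analytic on $\Omega$; in particular it agrees a.e.\ with an $\alpha$-analytic function, namely itself. In case~(i) we have $f|_{S_{j,w}}\in C^{\kappa_j}(S_{j,w})$ with $\kappa_j=\min\{1,\alpha_j-1\}$, and $\re f$, $\im f$ restricted to $S_{j,w}$ solve the planar $p'$-, $p''$-Laplace equation off the zero set (the natural slice-wise reading of hypothesis~(i), matching the planar Theorem~\ref{halvmain}); hence Theorem~\ref{halvmain}(i) furnishes an $\alpha_j$-analytic function $g_{j,w}$ on $S_{j,w}$ with $g_{j,w}=f|_{S_{j,w}}$ Lebesgue-a.e. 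But $f|_{S_{j,w}}$ is continuous (as $f$ is separately $C^{\kappa_j}$-smooth and $\kappa_j\geq 0$) and $g_{j,w}$ is $C^\infty$ (a one-variable polyanalytic function is smooth), and two continuous functions on a domain that agree a.e.\ agree everywhere; hence $f|_{S_{j,w}}=g_{j,w}$ is $\alpha_j$-analytic. As before this holds for all $j,w$, so $f$ is separately polyanalytic of order $\alpha$ and Theorem~\ref{hartog1} concludes the proof.

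The step I expect to demand the most care is the reduction to the planar results on the slices: one must verify that boundedness together with $\mathbb{C}$-convexity genuinely provides bounded, simply connected one-dimensional slices, so that Theorem~\ref{halvmain} and Proposition~\ref{halvmainlemma} apply verbatim; that the slice-wise regularity $C^{\kappa_j}$ is precisely what is needed to run Kilpel\"ainen's Theorem~\ref{kipelthm} inside the proof of Theorem~\ref{halvmain} on each slice (it is, since $\kappa_j\geq 1$ exactly when $\alpha_j\geq 2$, while $\alpha_j=1$ is classical Rad\'o); and that the $p$-Laplace hypothesis in~(i) is understood slice-wise. If instead one wishes to read~(i) as a genuinely $2n$-dimensional condition, the per-slice almost-everywhere modifications must be glued by a Fubini argument: one replaces $\re f$ and $\im f$ by their canonical $p$-harmonic representatives on $\Omega$, obtains $\tilde f$ which coincides with $f$ a.e., and observes slice by slice that $\{\,\partial_{\bar z_j}^{\alpha_j}\tilde f=0\,\}$ is closed and of full measure, hence all of $\Omega$; then Theorem~\ref{hartog1} again applies. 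In every case the passage from separate to joint $\alpha$-analyticity is the "known technique" alluded to in the abstract.
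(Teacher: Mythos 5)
Your proposal follows essentially the same route as the paper: slice $\Omega$ along complex lines parallel to the coordinate axes (using $\mathbb{C}$-convexity to get simply connected planar slices), apply the one-variable results (Proposition~\ref{halvmainlemma} in case~(ii), Theorem~\ref{halvmain} in case~(i)) to each slice, and invoke the Avanissian--Traor\'e Theorem~\ref{hartog1} to pass from separate to joint $\alpha$-analyticity. Your additional observation that the per-slice a.e.\ modification in case~(i) upgrades to everywhere equality by continuity is a welcome refinement of the gluing step, which the paper itself passes over rather quickly, but it does not change the argument's structure.
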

\begin{proof}
	Denote for a fixed $c\in \C^{n-1}$, $\Omega_{c,k}:=\{ z\in \Omega
	:z_j=c_j,j<k, z_j=c_{j-1},j>k \}$. Since $\Omega$ is $\C$-convex, $\Omega_{c,k}$ is simply connected. Consider the function
	$f_c(z_k):=$ $f(c_1,\ldots,c_{k-1},z_k,c_{k},\ldots,c_{n-1})$.
	Clearly, $f_c$ is $\alpha_k$-analytic on $\Omega_{c,k}\setminus
	f^{-1}(0)$ for any $c\in \C^{n-1}.$ Since $f^{-1}_c(0)\subseteq
	f^{-1}(0)$, Theorem~\ref{halvmain} applies to $f_c$ meaning that
	$f$ agrees a.e.\ with a function $\tilde{f}$ that is {\em separately} polyanalytic of order $\alpha_j$ in the
	variable $z_j, 1\leq j\leq n$.  
	By Theorem~\ref{hartog1} the function $\tilde{f}$ must be polyanalytic of order $\alpha$ 
	on $\Omega$.
	This completes the proof.
\end{proof}

\begin{corollary}
	Let $\Omega\subset \C$ be a bounded $\C$-convex domain and let $\alpha\in \Z_+^n$. 
	Suppose $f$ is separately $C^{\alpha_j}$-smooth with respect to $z_j,$ $j=1,\ldots,n.$
	If $f$ is $\alpha$-analytic on $\Omega\setminus f^{-1}(0),$
	then $f$ agrees (Lebesgue) a.e.\ with a function that is $\alpha$-analytic on~$\Omega$. 
\end{corollary}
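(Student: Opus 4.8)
The plan is to derive the corollary directly from the preceding several-variable theorem by observing that its hypothesis is exactly condition~(ii) of that theorem. First I would note that $f$ being separately $C^{\alpha_j}$-smooth with respect to $z_j$ for each $j=1,\ldots,n$ is, verbatim, the statement of condition~(ii) in the theorem on polyanalytic functions in several complex variables. Likewise, the assumption that $f$ is $\alpha$-analytic on $\Omega\setminus f^{-1}(0)$ matches the standing hypothesis of that theorem. Hence there is nothing to check beyond confirming the domain hypothesis: a bounded $\mathbb{C}$-convex domain is in particular a domain whose slices $\Omega_{c,k}$ are simply connected, which is precisely what the proof of the theorem invokes.

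The key steps, in order, would be: (1) fix the data $\Omega$, $\alpha$, $f$ as in the corollary; (2) verify that $\Omega$ bounded and $\mathbb{C}$-convex places us in the setting of the several-variable theorem; (3) observe that the separate $C^{\alpha_j}$-smoothness hypothesis is condition~(ii) of that theorem; (4) apply the theorem to conclude that $f$ agrees Lebesgue-a.e.\ with an $\alpha$-analytic function on $\Omega$. In effect the corollary is just the special case of the theorem in which one selects branch~(ii) of the disjunctive hypothesis, recorded separately because it is the cleanest and most quotable form (no $p$-Laplace condition, no regularity loss).

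I do not anticipate any genuine obstacle, since this is a specialization rather than a new argument; the only point requiring a word of care is that the ambient theorem is stated for $\Omega\subset\C$ in the displayed corollary but for $\Omega\subset\Cn$ in the theorem, so one should read $\Omega$ as a domain in $\Cn$ throughout (the case $n=1$ being already covered by Theorem~\ref{halvmain}). If one wished to avoid even invoking the several-variable theorem as a black box, the alternative would be to repeat its proof: slice $\Omega$ along the $k$-th coordinate, apply case~(ii) of Theorem~\ref{halvmain} (equivalently Proposition~\ref{halvmainlemma}) on each simply connected slice $\Omega_{c,k}$ to get separate polyanalyticity of a Lebesgue-a.e.\ representative $\tilde f$, and then invoke Theorem~\ref{hartog1} to upgrade separate polyanalyticity of order $\alpha_j$ in each $z_j$ to joint polyanalyticity of order $\alpha$ on $\Omega$. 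Either route closes the argument immediately.
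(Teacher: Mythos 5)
Your proposal is correct and matches the paper's (implicit) argument exactly: the corollary is stated without proof precisely because it is the special case of the several-variable theorem under hypothesis~(ii), which is how you derive it. You also rightly note the typo that $\Omega\subset\C$ should read $\Omega\subset\Cn$ for the statement to make sense with $\alpha\in\Z_+^n$.
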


\bibliographystyle{amsplain}

\begin{thebibliography}{10}

\bibitem{avan1} V. Avanissian, A. Traor\'e, Sur les fonctions polyanalytiques de plusiers variables, \textit{C.R. Acad. Sci. Paris S\'er. A-B} \textbf{286} (1978), no.17, A743-A746

\bibitem{avan2} V. Avanissian, A. Traor\'e, Extension des th\'eor\`emes de Hartogs et de Lindel{\"o}f aux fonctions polyanalytiques de plusieurs variables, \textit{C.R. Acad. Sci. Paris S\'er. A-B} \textbf{291} (1980), no. 4, A263-A265 

\bibitem {ca1} M.B. Balk, \textit{Polyanalytic functions and their generalizations,}
Encyclopaedia of Mathematical Sciences 
(Eds: A.A. Gonchar, V.P. Havin, N.K. Nikolski), 
Complex Analysis I, Vol.85, p.197-253, Springer, 1997 

\bibitem {cartan} H. Cartan, Sur une extension d'un theor\`eme de Rad\'o, \textit{Math. Ann.} \textbf{125} (1952), 49-50

\bibitem{kilpelainen} T. Kilpel\"ainen, A Rad\'o type theorem for p-harmonic functions in the plane, {\em Electron. J. Diff. Equ.} \textbf{9} (1994), 1-4 

\bibitem{tarkhanov} I. Ly, N. Tarkhanov, A Rad\'o theorem for p-harmonic functions, {\em Bol. Soc. Mat. Mex.} \textbf{22} (2016), 461-472

\bibitem {t1} T. Rad\'o, \"Uber eine nicht fortsetzbare Riemannsche Mannigfaltigkeit, \textit{Math. Z.} \textbf{20} (1924), 1-6

\end{thebibliography}

\end{document}